\documentclass[11pt]{article}

\setlength{\textwidth}{15.2cm} \setlength{\hoffset}{-1.5cm}
\setlength{\textheight}{24cm} \setlength{\voffset}{-2cm}

\usepackage{amsmath,amsthm,amssymb,amsxtra,amsfonts,amsbsy,mathrsfs}
\usepackage[all,2cell]{xy} \UseAllTwocells
\SilentMatrices
\newtheorem{theorem}[subsection]{Theorem}

\newtheorem{corollary}[subsection]{Corollary}

\newtheorem{lemma}[subsection]{Lemma}
\newtheorem{proposition}[subsection]{Proposition}

\newtheorem{sublemma}[subsubsection]{Lemma}
\theoremstyle{definition}
\newtheorem{notation-convention}[subsection]{Notations
and Conventions}
\newtheorem{definition}[subsection]{Definition}

\newtheorem{remark}[subsection]{Remark}

\newtheorem{anitem}[subsubsection]{}
\newtheorem{blank}[subsection]{}

\newcommand{\leftexp}[2]{{\vphantom{#2}}^{#1}{#2}}
\newcommand{\bb}{\mathbb}
\newcommand{\s}{\mathscr}

\begin{document}

\title{Decomposition Theorem for Perverse sheaves on Artin
stacks over finite fields}
\author{Shenghao Sun}
\date{}
\maketitle

\begin{abstract}
We generalize the decomposition theorem for perverse sheaves
to Artin stacks with affine stabilizers over finite
fields.
\end{abstract}

\section{Introduction}\label{sec-counter-ex}

Among the most important theorems on the topology of (families of)
projective complex manifolds, there are the hard Lefschetz theorem,
degeneration of the Leray spectral sequence at $E_2,$ and Deligne's
semisimplicity theorem of cohomology local systems. They all fail
for algebraic varieties with singularities, but if we replace the
ordinary cohomology groups with the intersection cohomology groups,
introduced by Goresky and Mac Pherson, these results turn out to
hold, and what lies in the center of the story is the notion of
\textit{perverse sheaves} and the \textit{decomposition theorem}
for them, first proved by Beilinson, Bernstein, Deligne and Gabber
\cite{BBD}. For more discussion on these, see \cite{deC-M}.

Then the notion of perverse sheaves was generalized to spaces with
group actions (the so-called \textit{equivariant perverse sheaves}),
and then more generally, to algebraic stacks \cite{LO3}. The key
observation here is that perverse sheaves can be glued with respect
to the \textit{smooth topology.} It is interesting to know if the
decomposition theorem generalizes. The case for equivariant perverse
sheaves has been proved in (\cite{BL}, 5.3). Let us remark here that
for algebraic stacks, it does not follow directly from the case for
algebraic varieties via the proper base change, because the
decomposability of a complex of sheaves is \textit{not} local for
the smooth topology, as the following counter-example of Drinfeld shows.

Let $E$ be a complex elliptic curve, and let
$f:\text{pt = Spec }\bb C\to BE$ be the natural
projection; this is a representable proper smooth map. A perverse
sheaf on $BE$ is the same as a lisse sheaf (which turns out to be
constant), appropriately shifted. There is a
natural non-zero morphism $\underline{\bb C}_{BE}\to
Rf_*\underline{\bb C}_{\text{pt}},$ adjoint to the
isomorphism $f^*\underline{\bb C}_{BE}\simeq
\underline{\bb C}_{\text{pt}},$ but there is no
non-zero morphism in the other direction, because
$$
Hom(Rf_*\underline{\bb C}_{\text{pt}},\underline{\bb
C}_{BE})=Hom(\underline{\bb C}_{\text{pt}},f^!\underline{\bb
C}_{BE})=Hom(\underline{\bb C}_{\text{pt}},
\underline{\bb C}_{\text{pt}}[2])=0.
$$
Here the $Hom$'s are taken in the derived categories.
Similarly, the non-zero natural map
$Rf_*\underline{\bb C}_{\text{pt}}\to R^2f_*\underline
{\bb C}_{\text{pt}}[-2]=\underline{\bb C}_{BE}[-2]$ lies in
$$
Hom(Rf_*\underline{\bb C}_{\text{pt}},\underline{\bb
C}_{BE}[-2])=Hom(\underline{\bb C}_{\text{pt}},f^!\underline{\bb
C}_{BE}[-2])=Hom(\underline{\bb C}_{\text{pt}},\underline{\bb
C}_{\text{pt}})=\bb C,
$$
but the Hom set in the other direction is zero:
$$
Hom(\underline{\bb C}_{BE}[-2],Rf_*\underline{\bb
C}_{\text{pt}})=Hom(f^*\underline{\bb C}_{BE}[-2],
\underline{\bb C}_{\text{pt}})=Hom(\underline{\bb
C}_{\text{pt}}[-2],\underline{\bb C}_{\text{pt}})=0.
$$
Therefore, $Rf_*\underline{\bb C}$ is not semi-simple
(since it is not a direct sum of the
$(R^if_*\underline{\bb C})[-i]$'s). The
same argument applies to finite fields, with $\underline{\bb
C}$ replaced by $\overline{\bb Q}_{\ell}.$

\begin{remark}\label{source}
This example was first given by Drinfeld, who asked for
the reason of the failure of the usual argument for
schemes. Later, it was communicated by J. Bernstein to
Y. Varshavsky, who asked M. Olsson in an email
correspondence. Olsson kindly shared this email with me,
and explained to me that the reason is the failure of the
upper bound of weights in \cite{Del2} for $BE.$
\end{remark}

In the following we explain why the usual proof (as in
\cite{BBD}) fails for $f.$ The proof of the
decomposition theorem over $\bb C$ relies on the
decomposition theorem over finite fields (\textit{loc. cit.},
5.3.8, 5.4.5), so it suffices to explain why the proof of
(\textit{loc. cit.}, 5.4.5) fails for $f:\text{Spec }\bb F_q
\to BE,$ for an $\bb F_q$-elliptic curve $E.$

Let $K_0=Rf_*\overline{\bb Q}_{\ell}.$ The perverse $t$-structure
agrees with the standard $t$-structure on $\text{Spec }\bb F_q,$ and
by definition (\cite{LO3}, 4), we have $\leftexp{p}{\s H}^iK_0=\s
H^{i+1}(K_0)[-1]$ on $BE,$ and so
$$
\bigoplus_i(\leftexp{p}{\s H}^iK)[-i]=\bigoplus_i
(\s H^iK)[-i].
$$
Each $R^if_*\overline{\bb Q}_{\ell}[-i]$ is pure of
weight 0. In the proof of (\cite{BBD}, 5.4.5), the exact
triangles
$$
\xymatrix@C=.5cm{
\tau_{<i}K_0 \ar[r] & \tau_{\le i}K_0 \ar[r] & (\s
H^iK_0)[-i] \ar[r] &}
$$
would split geometrically, because $Ext^1((\s H^iK)[-i],
\tau_{<i}K)$ would have weights $>0.$ We will see that this
group is pure of weight 0, and in fact has 1 as a Frobenius eigenvalue.
For simplicity, we denote $H^i(\s X,\overline{\bb
Q}_{\ell})$ by $H^i(\s X).$

Let $\pi:BE\to\text{Spec }\bb F_q$ be the structural
map; then $\pi\circ f=\text{id}.$ Since $E$ is connected,
the sheaf $R^if_*\overline{\bb Q}_{\ell}$ is the
inverse image of some sheaf on $\text{Spec }\bb
F_q,$ namely $f^*R^if_*\overline{\bb Q}_{\ell}.$ By
smooth base change, it is isomorphic to $\pi^*H^i(E)$
as a $\text{Gal}(\overline{\bb F}_q/\bb F_q)$-module. In particular,
$R^0f_*\overline{\bb Q}_{\ell}=\overline{\bb
Q}_{\ell},\ R^1f_*\overline{\bb Q}_{\ell}\cong\pi^*
H^1(E)$ and $R^2f_*\overline{\bb Q}_{\ell}=\overline{
\bb Q}_{\ell}(-1).$ Then the exact triangle above becomes
\begin{gather*}
i=2:\qquad\xymatrix@C=.5cm{
\tau_{\le1}K_0 \ar[r] & K_0 \ar[r] & \overline{\bb
Q}_{\ell}(-1)[-2] \ar[r] &} \\
i=1:\qquad\xymatrix@C=.5cm{
\overline{\bb Q}_{\ell} \ar[r] & \tau_{\le1}K_0
\ar[r] & \pi^*H^1(E)[-1] \ar[r] &.}
\end{gather*}
Apply $Ext^*(\overline{\bb Q}_{\ell}(-1)[-2],-)$ to
the second triangle. One can compute $H^*(BE)$ by a
theorem of Borel (see (\cite{Sun}, 7.2)): $H^{2i-1}(BE)=0,$
and $H^{2i}(BE)=\text{Sym}^iH^1(E).$ Let $\alpha$ and
$\beta$ be the eigenvalues of the Frobenius $F$ on
$H^1(E).$ We have
$$
Ext^1(\overline{\bb Q}_{\ell}(-1)[-2],
\overline{\bb Q}_{\ell})=Ext^3(\overline{\bb
Q}_{\ell},\overline{\bb Q}_{\ell}(1))=H^3(BE)(1)=0,
$$
and
\begin{equation*}
\begin{split}
Ext^1(\overline{\bb Q}_{\ell}(-1)[-2],\pi^*H^1(E)[-1])
&=H^2(BE)\otimes H^1(E)(1) \\
&=H^1(E)\otimes H^1(E)(1)=End(H^1(E)),
\end{split}
\end{equation*}
which is 4-dimensional with eigenvalues
$\alpha/\beta,\beta/\alpha,1,1,$ and
$$
Ext^2(\overline{\bb Q}_{\ell}(-1)[-2],
\overline{\bb Q}_{\ell})=H^4(BE)(1),
$$
which is 3-dimensional with eigenvalues $\alpha/\beta,
\beta/\alpha,1.$ This implies that the kernel
\begin{equation*}
\begin{split}
Ext^1(\overline{\bb Q}_{\ell}(-1)[-2],\tau_{\le1}K)
&= \\
&\text{Ker}\Big(Ext^1(\overline{\bb Q}_{\ell}(-1)[-2],
\pi^*H^1(E)[-1])\to Ext^2(\overline{\bb
Q}_{\ell}(-1)[-2],\overline{\bb Q}_{\ell})\Big)
\end{split}
\end{equation*}
is non-zero, pure of weight 0, and has 1 as a Frobenius
eigenvalue. So the first exact triangle above does not
necessarily (in fact does not, as the argument in the
beginning shows) split geometrically. Also
$$
Ext^1(\pi^*H^1(E)[-1],\overline{\bb Q}_{\ell})=
Ext^2(\overline{\bb Q}_{\ell},\pi^*H^1(E)^{\vee})=
H^1(E)\otimes H^1(E)^{\vee}=End(H^1(E))
$$
is 4-dimensional and has eigenvalues
$\alpha/\beta,\beta/\alpha,1,1,$ hence the proof for the
geometric splitting of the second exact triangle fails
too.

\vskip.5truecm

In \cite{LO3}, Laszlo and Olsson generalized the theory of
perverse sheaves to Artin stacks locally of finite type
over some field. In \cite{Sun}, we proved that for Artin
stacks of finite type over a finite field, with affine
stabilizers (\ref{affine-stab}),
Deligne's upper bound of weights for the compactly
supported cohomology groups still applies. In this paper,
we will show that for such stacks, similar argument as in
\cite{BBD} gives the decomposition theorem. We state it as follows
(see (\ref{torsion}) for the notation).

\begin{theorem}\label{main-thm}
Let $f:\s X_0\to\s Y_0$ be a proper morphism of finite diagonal
between $\bb F_q$-algebraic stacks of finite type with affine
stabilizers (\ref{affine-stab}), and let $K_0$ be an $\iota$-pure
$\overline{\bb Q}_{\ell}$-complex on $\s X_0.$ Then we have
$$
Rf_*K\simeq\bigoplus_{i\in\bb Z}(\leftexp{p}{R}^if_*K)[-i],
$$
and each $\leftexp{p}{R}^if_*K$ is a semi-simple perverse sheaf on
$\s Y.$ Consequently, if $K_0$ is a semi-simple $\overline{\bb
Q}_{\ell}$-perverse sheaf on $\s X_0,$ the conclusion above also holds.
\end{theorem}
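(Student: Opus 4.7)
The plan is to follow the BBD strategy (\cite{BBD}, 5.4.5) and adapt it to the stacky setting, with the author's extension of Deligne's weight theorem to Artin stacks with affine stabilizers (\cite{Sun}) as the key new ingredient. The second assertion, for semisimple perverse sheaves, reduces to the first by decomposing a semisimple perverse sheaf as a direct sum of $\iota$-pure simple summands and applying the pure case to each.

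I would proceed in three main steps. First, I would show that $Rf_*K_0$ is $\iota$-pure of weight $w$. Properness gives $Rf_*=Rf_!$, so the upper bound on weights comes directly from \cite{Sun}; for the lower bound, the finite-diagonal hypothesis on $f$ lets me invoke Verdier duality and apply the upper bound to the $\iota$-pure dual $DK_0$. Standard arguments on the weight filtration then show each $\leftexp{p}{R}^if_*K_0$ is $\iota$-pure of weight $w+i$. Second, I would prove the geometric splitting of the perverse truncation triangles
$$
\tau_{<i}(Rf_*K) \to \tau_{\le i}(Rf_*K) \to (\leftexp{p}{R}^if_*K)[-i] \xrightarrow{+1}
$$
on $\s Y$, by arguing that the obstruction in $\text{Ext}^1((\leftexp{p}{R}^if_*K)[-i], \tau_{<i}(Rf_*K))$ has no Frobenius-invariant part. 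As in BBD, this reduces to showing that $\text{Ext}^1$ between pure perverse sheaves of equal weight on $\s Y_0$ has strictly positive weight. Third, I would establish geometric semisimplicity of each $\leftexp{p}{R}^if_*K$ by a smooth-descent reduction to Gabber's theorem on schemes.

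The main obstacle should be the Ext-weight computation in the second step. As the counter-example in Section~\ref{sec-counter-ex} shows, for a stack like $BE$ the relevant $\text{Ext}^1$ can be pure of weight $0$ with Frobenius eigenvalue $1$, so the BBD weight argument genuinely breaks down without extra hypotheses. The assumption that $f$ have finite diagonal is what excludes this pathology, but to run the splitting argument one must carefully dualize the compactly supported weight bounds of \cite{Sun} (working through a smooth presentation of $\s Y_0$) and confirm that the resulting estimates on $\text{Ext}$-groups of perverse sheaves on $\s Y_0$ are strong enough to force the obstruction to have positive weight. Once this is in place, the rest of the BBD machinery (induction on the length of the perverse truncation, recovery of arithmetic splitting from geometric splitting) should transport to stacks with essentially no change.
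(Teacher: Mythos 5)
Your three-step skeleton (purity of $f_*K_0$, weight-theoretic splitting of the perverse truncation triangles, semisimplicity of the perverse cohomologies) is indeed the paper's architecture, but you have misplaced the hypotheses, and this leaves real gaps. First, for Artin stacks properness does \emph{not} give $Rf_*\simeq Rf_!$: the morphism $BE\to\mathrm{Spec}\,k$ is proper (its diagonal, $E$, is proper), yet $f_!\ne f_*$ for it. The identification $f_!\simeq f_*$ is exactly where the finite-diagonal hypothesis enters, via Olsson's theorem (\cite{Ols1}, 5.17); combined with (\ref{5.1.14}iii) ($f_!$ preserves $D^-_{\le w}$ when the source has affine stabilizers, $f_*$ preserves $D^+_{\ge w}$) this is how the paper gets $\iota$-purity of $f_*K_0$. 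As written, your step one rests on an assertion that is false for stacks, while the place you do invoke finite diagonal (Verdier duality) needs no such hypothesis. Second, you claim the finite-diagonal assumption is what excludes the $BE$ counter-example; it is not. In that example $f:\mathrm{Spec}\,\bb F_q\to BE$ is representable and proper, so it has finite diagonal, and the pushforward is even $\iota$-pure there; what fails is the affine-stabilizers hypothesis on the \emph{target}, which is what powers the Ext-weight estimate (\ref{5.1.15}iii) used both for the splitting of the truncation triangles (\ref{5.4.5}) and for semisimplicity (\ref{5.3.8}). Identifying correctly which hypothesis does what is not cosmetic: it is the content of the theorem.

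Third, your semisimplicity step, ``smooth-descent reduction to Gabber's theorem on schemes,'' is unjustified, and it is precisely the kind of statement the introduction warns is not smooth-local. The paper does not descend semisimplicity from a presentation: it proves (\ref{5.3.8}) on the stack itself, by showing that the largest semisimple perverse subsheaf $\s F'\subset\s F$ is Frobenius-stable, hence descends to $\s F_0'\subset\s F_0$, and that the extension of $\s F''$ by $\s F'$ splits because $Ext^1(\s F'',\s F')^F=0$ by (\ref{5.1.15}iii) --- again a weight argument requiring affine stabilizers of the target, not a reduction to the scheme case. If you insist on your route, you would need to prove that semisimplicity of perverse sheaves can be tested after pullback along a presentation (e.g.\ injectivity of the relevant $Ext^1$ on the stack into $Ext^1$ on the presentation), which is not known to be formal and is essentially the difficulty the weight argument is designed to circumvent. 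So while your outline tracks the paper, each stack-specific input --- Olsson's $f_!\simeq f_*$ for proper morphisms of finite diagonal, the affine-stabilizer weight bounds on $\s Y_0$ feeding (\ref{5.1.15}), and the weight-theoretic proof of geometric semisimplicity --- must be supplied where you currently substitute scheme-level facts or misattributed hypotheses.
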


\textbf{Organization.} In $\S\ref{sec-prototype}$ we complete the
proof of the structure theorem for $\iota$-mixed sheaves on stacks,
as claimed in (\cite{Sun}, 2.7.1). In $\S\ref{sec-decomp-F_q},$ we
generalize the decomposition theorem for perverse sheaves on stacks
over finite fields, using weight theory. In the end, we mention the
decomposition theorem for stacks over $\bb C.$

\begin{notation-convention}\label{notat-conv}

\begin{anitem}\label{frob}
We fix an algebraic closure $\bb F$ of the finite
field $\bb F_q$ with $q$ elements. Let $F$ or $F_q$ be
the $q$-geometric Frobenius, namely the $q$-th root
automorphism on $\bb F.$ Let $\ell$ be a prime number,
$\ell\ne p,$ and fix an embedding of fields
$\overline{\bb Q}_{\ell}\overset{\iota}{\to}\bb
C.$ For $z\in\bb C,$ let $w_q(z)=2\log_q|z|.$
\end{anitem}

\begin{anitem}\label{ft}
For the definition of an Artin stack (or an algebraic stack), we
refer to (\cite{Ols2}, 1.2.22). We only consider algebraic stacks
\textit{of finite type} over the base.
\end{anitem}

\begin{anitem}\label{torsion}
Objects over $\bb F_q$ will be denoted with a subscript
$_0,$ and suppression of it means passing to $\bb F$ by extension
of scalars. For instance, if $K_0$ is a $\overline{\bb
Q}_{\ell}$-complex of sheaves on an $\bb F_q$-Artin stack $\s X_0,$
then $K$ denotes its inverse image
on $\s X:=\s X_0\otimes_{\bb F_q}\bb F.$ For $b\in\overline
{\bb Q}_{\ell}^*,$ let $\overline{\bb Q}_{\ell}^{(b)}$
be the lisse Weil sheaf of rank one on $\text{Spec }\bb
F_q$ corresponding to the character that sends $F_q$ to
$b$ (see (\cite{Sun}, 2.4)).
\end{anitem}

\begin{anitem}
For an algebraic stack $X$ over a field $k,$ we say it is
\textit{essentially smooth} if $(X_{\overline{k}})
_{\text{red}}$ is smooth over $\overline{k}.$
\end{anitem}

\begin{anitem}
For a map $f:X\to Y$ and a complex of sheaves $K$ on $Y,$ we
sometimes write $H^n(X,K)$ for $H^n(X,f^*K).$
\end{anitem}

\begin{anitem}
We will denote $Rf_*,Rf_!,Lf^*$ and $Rf^!$ by $f_*,f_!,f^*$ and
$f^!$ respectively. We use $Hom$ and $Ext$ (resp. $\s Hom$ and $\s
Ext$) for the global Hom and Ext (resp. sheaf Hom and Ext).
\end{anitem}

\begin{anitem}
We will only consider the middle perversity (\cite{BBD}, 4.0). We use
$\leftexp{p}{\s H}^i$
and $\leftexp{p}{\tau}_{\le i}$ to denote cohomology and truncations with
respect to this perverse $t$-structure.
\end{anitem}
\end{notation-convention}

\begin{flushleft}
\textbf{Acknowledgment.}
\end{flushleft}

I would like to thank my advisor Martin Olsson for
introducing this topic to me, and giving so many suggestions
during the writing. Yves Laszlo and Weizhe Zheng pointed out
some mistakes and gave many helpful comments. Many
people, especially Brian Conrad and Matthew Emerton, have
helped to answer my questions related to this paper on
mathoverflow. The revision of the paper was done during the stay in Ecole
polytechnique CMLS (UMR 7640) and Universit\'e Paris-Sud (UMR 8628), while
I was supported by ANR grant G-FIB.

\section{The prototype: the structure theorem of $\iota$-mixed
sheaves on stacks}\label{sec-prototype}

We generalize the structure theorem of $\iota$-mixed
sheaves (\cite{Del2}, 3.4.1) to stacks. This result has little
to do with the rest of this paper (except in (\ref{5.3.4})), but it is the
prototype of the corresponding
results (e.g. weight filtrations and the decomposition
theorem) for perverse sheaves. In this section, sheaves are
understood as Weil sheaves. See (\cite{Sun}, 2.4.3) for the definitions
of punctually $\iota$-pure sheaves and $\iota$-mixed sheaves.

\begin{theorem}\label{3.4.1}\emph{(stack version of
(\cite{Del2}, 3.4.1))}
Let $\s X_0$ be an $\bb F_q$-algebraic stack.

(i) Every $\iota$-mixed sheaf $\s F_0$ on $\s X_0$ has a unique
decomposition $\s F_0=\bigoplus_{b\in\bb{R/Z}}\s F_0(b),$
called the \emph{decomposition according to the weights
mod }$\bb Z,$ such that the punctual $\iota$-weights
of $\s F_0(b)$ are all in the coset $b.$ This
decomposition, in which almost all the $\s F_0(b)$'s
are zero, is functorial in $\s F_0.$

(ii) Every $\iota$-mixed lisse sheaf $\s F_0$ with
integer punctual $\iota$-weights on $\s X_0$ has a
unique finite increasing filtration $W$ by lisse
subsheaves, called the \emph{weight filtration}, such that
$\emph{Gr}_i^W$ is punctually $\iota$-pure of weight $i.$
Every morphism between such sheaves on $\s X_0$ is
strictly compatible with their weight filtrations.

(iii) If $\s X_0$ is a normal algebraic stack (i.e. it has
a normal presentation), and
$\s F_0$ is a lisse and punctually $\iota$-pure
sheaf on $\s X_0,$ then $\s F$ on $\s X$ is semi-simple.
\end{theorem}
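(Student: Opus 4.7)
My strategy for all three parts is smooth descent from Deligne's scheme version (\cite{Del2}, 3.4.1). Fix a smooth presentation $p_0:X_0\to\s X_0$ by a scheme, with two projections $s_0,t_0:X_0\times_{\s X_0}X_0\to X_0$; a sheaf on $\s X_0$ is then a sheaf on $X_0$ together with a cocycle-satisfying descent datum on the groupoid $X_0\times_{\s X_0}X_0\rightrightarrows X_0$. The plan is to apply Deligne's theorem to $p_0^*\s F_0$ on $X_0$ and then descend the resulting structures back to $\s X_0$.

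For parts (i) and (ii), the key observation is that the scheme-level decomposition and weight filtration on $p_0^*\s F_0$ are uniquely characterized by the stated properties, and hence are functorial in the sheaf. Functoriality applied to the smooth morphisms $s_0$ and $t_0$ forces the descent datum on $p_0^*\s F_0$ to respect these structures, so both descend to $\s X_0$. The subsheaves appearing in the weight filtration remain lisse on $\s X_0$ because lisseness is smooth-local. Functoriality of the decomposition in (i) and strictness of morphisms in (ii) on $\s X_0$ then follow from the corresponding statements on $X_0$, using that $p_0^*$ is faithful and exact on the abelian category of lisse sheaves.

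Part (iii) is the main obstacle. Choose $X_0$ normal, as permitted by the hypothesis. Then $p_0^*\s F_0$ is lisse and punctually $\iota$-pure on a normal scheme, so $p^*\s F$ is semi-simple on $X$ by (\cite{Del2}, 3.4.1(iii)). Descent of semi-simplicity is subtle, and in contrast to the canonical structures in (i) and (ii) is not automatic: lisse sheaves on a point are trivially semi-simple, but those on $BG$ for a reductive $G$ correspond to possibly non-semi-simple algebraic representations of $G$, so bare descent cannot suffice. The resolution must exploit the arithmetic input of purity. Concretely, I would try to show that every short exact sequence $0\to\s A\to\s F\to\s C\to 0$ of lisse sheaves on $\s X$ splits: it splits on $X$ by Deligne's result, so the extension class in $Ext^1_{\s X}(\s C,\s A)$ dies after pulling back to $X$; to conclude that the class itself vanishes, analyze the descent spectral sequence for $p$ applied to $\s Hom(\s C,\s A)$, using the weight bounds from \cite{Sun} on this pure lisse sheaf to kill the correction terms that measure the failure of $p^*$ to be injective on $Ext^1$. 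I expect this last step to be the technical heart of the argument.
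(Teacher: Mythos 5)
Your treatment of (i) and (ii) by descending the canonical (hence unique and functorial) decomposition and weight filtration along a presentation is fine, and is essentially what the paper does — it simply cites (\cite{Sun}, 2.7.1) for these parts. The problem is (iii), the only part actually proved in the paper, where the step you defer as ``the technical heart'' is both the entire content of the argument and, as you have set it up, not completable by the tools you name.

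Two concrete issues. First, you propose to split an arbitrary exact sequence $0\to\s A\to\s F\to\s C\to0$ of lisse sheaves on $\s X$ by a weight argument, but $\s A$ and $\s C$ are subsheaves over $\bb F$ and need not be Frobenius-stable, so $\s Hom(\s C,\s A)$ has no model over a finite field and no weights to exploit; weight bounds only ever kill Frobenius-stable classes or subspaces, never arbitrary geometric extension classes (already on an elliptic curve, $Ext^1(\overline{\bb Q}_{\ell},\overline{\bb Q}_{\ell})=H^1(E,\overline{\bb Q}_{\ell})\ne0$ gives non-split extensions of pure lisse sheaves over $\bb F$). This is why the paper, following the proof of (\cite{Del2}, 3.4.5), first passes to the maximal semisimple subsheaf $\s F'\subset\s F$, which is canonical, hence $F$-stable and descends to $\s F_0'\subset\s F_0$; only then does a weight argument apply to the class of the resulting arithmetic extension. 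Your sketch omits this reduction entirely. Second, even after that reduction, what is needed is not an upper bound from \cite{Sun} but a lower bound: $H^1(\s X,\s G)$ is $\iota$-mixed of weights $\ge 1+w$ for $\s G_0$ lisse punctually $\iota$-pure of weight $w$ on the stack (the stack version of (\cite{Del2}, 3.4.3)). The correction term in your descent spectral sequence, $E_2^{1,0}$, is built from $H^0$'s on the simplicial pieces and has weights $\le w$, so upper bounds cannot kill it; it dies (as an $F$-stable subspace of $H^1(\s X,\s G)$) exactly because of the lower bound above. Moreover, the blanket weight bounds of \cite{Sun} require affine stabilizers, which (iii) does not assume; the paper proves the needed bound for an arbitrary normal stack by Poincar\'e duality, reducing via rigidification of the (flat, after shrinking) inertia to the top-degree compactly supported cohomology of $BG$, which is pure of the expected weight without any affineness hypothesis, and it also needs the preliminary step (via \cite{LO3}, 8.3) of extending lisse subsheaves from a dense open substack so as to shrink to a smooth geometrically connected model. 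Once one has the $H^1$ bound on the stack itself, Deligne's argument runs directly on $\s X$, so the detour of descending semisimplicity from the presentation buys nothing and, in the form you propose, cannot be closed.
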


\begin{proof}
(i) and (ii) are proved in (\cite{Sun}, 2.7.1), where
(iii) is claimed without giving a detailed proof.
Here we complete the proof of (iii).

First of all, note that we may make a finite extension of
the base field $\bb F_{q^v}/\bb F_q.$
From the proof of (\cite{LO3}, 8.3), we see that if
$\s U\subset\s X$ is an open substack, and
$\s G_{\s U}$ is a lisse subsheaf of $\s
F|_{\s U},$ then it extends to a unique lisse subsheaf
$\s G\subset\s F.$ Applying the full-faithfulness
in (\textit{loc. cit.}) we see that if $\s F|_{\s U}$
is semi-simple, so also is $\s F.$ Therefore, we may shrink
$\s X$ to a dense open substack $\s U,$ and
replace $\s X_0$ by some model of $\s U$ over a
finite extension $\bb F_{q^v}.$ We can then assume
$\s X_0$ is smooth and geometrically connected.

Following the proof (\cite{Del2}, 3.4.5), it suffices to
show (\cite{Del2}, 3.4.3) for stacks. We claim that, if
$\s F_0$ is lisse and punctually $\iota$-pure of
weight $w,$ then $H^1(\s{X,F})$ is $\iota$-mixed of
weights $\ge1+w.$ The conclusion follows from this claim.

Let $N=\dim\s X_0.$ By Poincar\'e duality, it
suffices to show that, for every lisse sheaf $\s
F_0,$ punctually $\iota$-pure of weight $w,\ H^{2N-1}_c
(\s{X,F})$ is $\iota$-mixed of weights $\le2N-1+w.$
To show this, we may shrink $\s X_0$ to assume that
the inertia $\s I_0\to\s X_0$ is flat, with rigidification
$\pi:\s X_0\to X_0$ (cf. (\cite{Ols2}, 1.5)). We have the spectral
sequence
$$
H^r_c(X,R^k\pi_!\s F)\Longrightarrow
H^{r+k}_c(\s{X,F}),
$$
so let $r+k=2N-1.$ Note that $k$ can only be of the form
$-2i-2d,$ for $i\ge0,$ where $d=\text{rel. dim}(\s
I_0/\s X_0).$ So we have $r=2\dim X_0+2i-1,$ and in
order for $H^r_c(X,-)$ to be non-zero, $i=0.$ Then
$$
H^{2N-1}_c(\s{X,F})=H^{2\dim X-1}_c(X,R^{-2d}\pi_!\s F).
$$
The claim follows from the fact that $R^{-2d}\pi_!\s
F_0$ is punctually $\iota$-pure of weight $w-2d.$ To see this
fact, it suffices to show that $H^{-2d}_c(BG,\s F)$ has
weight $w-2d,$ for any $\bb F_q$-algebraic group $G_0$ of
dimension $d,$ and any lisse punctually $\iota$-pure sheaf
$\s F_0$ on $BG_0$ of weight $w.$ By considering the Leray spectral
sequence for the natural map $BG_0\to B\pi_0(G_0),$ we reduce to
the case where $G_0$ is connected, and this case is clear. See
the proof of (\cite{Sun}, 1.4) for more details.
\end{proof}

\section{Decomposition theorem for stacks over $\bb
F_q$}\label{sec-decomp-F_q}

For an $\bb F_q$-algebraic stack $\s X_0,$ let $D_m(\s
X_0,\overline{\bb Q}_{\ell})$ be the full subcategory of mixed
complexes in $D_c(\s X_0,\overline{\bb Q}_{\ell})$ (see (\cite{LO3},
9.1)). It is stable under the six operations (\cite{Sun}, 2.11,
2.12) and the perverse truncations $\leftexp{p}{\tau}_{\le0}$ and
$\leftexp{p}{\tau}_{\ge0}.$ The latter can be checked smooth
locally, and hence follows from (\cite{BBD}, 5.1.6). The core of
$D_m (\s X_0,\overline{\bb Q}_{\ell})$ with respect to this induced
perverse $t$-structure is called the category of \textit{mixed
perverse sheaves} on $\s X_0,$ as defined in (\cite{LO3}, 9.1). This
is a Serre subcategory of all $\overline{\bb Q}_{\ell}$-perverse
sheaves $\text{Perv}(\s X_0),$ i.e. it is closed under sub-quotients
and extensions. For sub-quotients, see (\cite{LO3}, 9.3). For
extensions, note that a short exact sequence of perverse sheaves is
an exact triangle in $D_c^b,$ and we may apply (the mixed variant
of) (\cite{Sun}, 2.5 iii).

Nevertheless, in this paper, we will consider the more general notion of
\textit{$\iota$-mixed complexes} and in particular,
\textit{$\iota$-mixed perverse sheaves}. This weaker
condition will be sufficient for the purpose of proving the
decomposition theorem.
In fact, Lafforgue has proved the conjecture of Deligne that,
all (Weil) sheaves are $\iota$-mixed, for any $\iota.$ See (\cite{Lau},
1.3) and (\cite{Sun}, 2.8.1).

The following definition comes from (\cite{Del2}, 6.2.4).

\begin{definition}\label{D3.1}
Let $K_0\in D_c(\s X_0,\overline{\bb Q}_{\ell})$
and $w\in\bb R.$ We say that $K_0$ \emph{has $\iota$-weights
$\le w$} if for each $i\in\bb Z,$ the punctual $\iota$-weights
of $\s H^iK_0$ are $\le i+w,$ and we denote by
$D_{\le w}(\s X_0,\overline{\bb Q}_{\ell})$ the full
subcategory of such complexes. We say that \emph{$K_0$ has
$\iota$-weights $\ge w$} if its Verdier dual $DK_0$ has
$\iota$-weights $\le-w,$ and denote by $D_{\ge w}(\s
X_0,\overline{\bb Q}_{\ell})$ the subcategory of such
complexes. We say that \emph{$K_0$ is $\iota$-pure of weight
$w$} if it belongs to both $D_{\le w}$ and $D_{\ge w}.$
\end{definition}

\begin{lemma}\label{L-pres}
Let $P:X_0\to\s X_0$ be a presentation, and $K_0\in
D_c(\s X_0,\overline{\bb Q}_{\ell}).$ Then $K_0$
is $\iota$-mixed of weights $\le w$ (resp. $\ge w$) if and
only if $P^*K_0$ (resp. $P^!K_0$) is so.
\end{lemma}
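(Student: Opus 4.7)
The plan is to split the lemma into two assertions, the $\le w$ case for $P^{*}$ and the $\ge w$ case for $P^{!}$, and to derive the second from the first by Verdier duality. Both directions in each case are equivalences, so the surjectivity of the presentation $P$ (on points) will play a role when deducing the condition on $\s X_0$ from the one on $X_0$.

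For the $\le w$ case, the first move is to reduce from complexes to sheaves. Since $P$ is smooth and in particular flat, $P^{*}$ is $t$-exact for the standard $t$-structure, so $\s H^{i}(P^{*}K_{0})=P^{*}\s H^{i}K_{0}$ for all $i$. By Definition~\ref{D3.1}, the inequality ``$\iota$-weights $\le w$'' for $K_0$ is a condition on each $\s H^{i}K_0$ separately, so it suffices to prove the statement: a constructible sheaf $\s F_0$ on $\s X_0$ has punctual $\iota$-weights $\le w'$ if and only if $P^{*}\s F_0$ does. Now punctual $\iota$-weights at a point $\s x\in|\s X_0|$ are read off from the Frobenius action on a geometric stalk above $\s x$; for smooth $P$, the geometric stalk of $P^{*}\s F_0$ at any $\bar x\in X_0$ lying above $\bar{\s x}$ is canonically identified with $\s F_{0,\bar{\s x}}$, with Frobenius acting through a finite extension of residue fields. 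Such a residue-field extension multiplies all eigenvalues by powers of a common scalar of absolute value $1$ under $\iota$, so it preserves the property of having $\iota$-weights $\le w'$. Surjectivity of $P$ on points ensures that every $\s x$ arises as $P(x)$ for some $x\in|X_0|$, giving the equivalence.

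For the $\ge w$ case, by Definition~\ref{D3.1} we have $K_0\in D_{\ge w}$ iff $DK_0\in D_{\le -w}$, and similarly $P^{!}K_0\in D_{\ge w}$ iff $D(P^{!}K_0)\in D_{\le -w}$. Verdier duality yields the natural isomorphism $D\circ P^{!}\simeq P^{*}\circ D$, so
\[
P^{!}K_0\in D_{\ge w}\iff P^{*}(DK_0)\in D_{\le -w}\iff DK_0\in D_{\le -w}\iff K_0\in D_{\ge w},
\]
where the middle equivalence is the $\le$ case already proved applied to $DK_0$.

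The only delicate step is the sheaf-level claim that punctual $\iota$-weights are preserved and detected under smooth pullback; this is essentially built into the definition of punctual weight on a stack via a smooth presentation in (\cite{Sun}, 2.4.3), so no serious new work is required beyond the bookkeeping of residue-field extensions. The $t$-exactness of $P^{*}$ and the compatibility of Verdier duality with smooth pullback are standard, so they carry over to the stack setting by the six-operations formalism of (\cite{LO3}) and (\cite{Sun}, 2.11, 2.12).
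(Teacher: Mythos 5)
Your argument is correct and follows essentially the same route as the paper's proof: Verdier duality ($D\circ P^{!}\simeq P^{*}\circ D$) reduces the $\ge w$ statement to the $\le w$ one, and the latter is checked pointwise using exactness of $P^{*}$ and surjectivity of $P$ (the paper delegates this sheaf-level step to (\cite{Sun}, 2.8)). One small slip in your bookkeeping: under a degree-$n$ extension of residue fields the Frobenius eigenvalues are raised to the $n$-th power, not multiplied by scalars of absolute value $1$; the punctual $\iota$-weights are nonetheless unchanged because the normalization $w_{q^{vn}}$ from (\ref{frob}) compensates, so your conclusion stands.
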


\begin{proof}
The two statements are dual to each other, so
it suffices to consider only the case where $K_0$ has
weights $\le w.$ The ``only if" part is obvious, and the
``if" part follows from (\cite{Sun}, 2.8) and the assumption
that $P$ is surjective.
\end{proof}

\begin{lemma}\label{5.3.2}\emph{(stack version of (\cite{BBD},
5.3.1, 5.3.2))}
(i) For $\s F_0\in\emph{Perv}_{\le w}(\s X_0)$ (resp. $\s F_0
\in\emph{Perv}_{\ge w}(\s X_0)$), all of its sub-quotients
are $\iota$-mixed of weights $\le w$ (resp. $\ge w$).

(ii) Let $j:\s U_0\hookrightarrow\s X_0$ be an
immersion of algebraic stacks. Then for any real number
$w,$ the intermediate extension $j_{!*}$ (\cite{LO3}, 6)
respects $\emph{Perv}_{\ge w}$ and $\emph{Perv}_{\le w}.$
In particular, if $\s F_0$ is an $\iota$-pure
perverse sheaf on $\s U_0,$ then $j_{!*}\s
F_0$ is $\iota$-pure of the same weight.
\end{lemma}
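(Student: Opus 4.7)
The ``$\le w$'' and ``$\ge w$'' halves of each assertion are exchanged by Verdier duality, which sends $\mathrm{Perv}_{\le w}$ to $\mathrm{Perv}_{\ge -w}$, carries sub-objects in $\mathrm{Perv}$ to quotients, and commutes with $j_{!*}$. Hence in each of (i) and (ii) it suffices to treat the ``weights $\le w$'' version for all $w$.

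For (i), the plan is to reduce to the scheme-theoretic analogue (\cite{BBD}, 5.3.1) by smooth descent. Choose a smooth surjective presentation $P\colon X_0 \to \mathscr{X}_0$ of constant relative dimension $d$. From the Laszlo--Olsson definition of the perverse $t$-structure on stacks, the shifted pullback $P^*[d]$ is $t$-exact, and hence carries short exact sequences in $\mathrm{Perv}(\mathscr{X}_0)$ to short exact sequences in $\mathrm{Perv}(X_0)$. In particular, a sub-quotient $\mathscr{G}_0$ of $\mathscr{F}_0$ in $\mathrm{Perv}(\mathscr{X}_0)$ pulls back to a sub-quotient $P^*\mathscr{G}_0[d]$ of $P^*\mathscr{F}_0[d]$ in $\mathrm{Perv}(X_0)$. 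By Lemma \ref{L-pres} the weight bound is detected by $P^*$, and a homological shift $[d]$ translates the bound by $d$; so $P^*\mathscr{F}_0[d] \in \mathrm{Perv}_{\le w+d}(X_0)$, and (\cite{BBD}, 5.3.1) applied on $X_0$ forces $P^*\mathscr{G}_0[d] \in \mathrm{Perv}_{\le w+d}(X_0)$. Undoing the shift and reapplying Lemma \ref{L-pres} yields $\mathscr{G}_0 \in \mathrm{Perv}_{\le w}(\mathscr{X}_0)$.

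For (ii), factor $j$ as $i \circ j'$ with $i$ a closed immersion and $j'$ an open immersion. Since $j_{!*}$ respects composition, the closed case reduces to the immediate fact that $i_{!*} = i_*$ preserves both $\mathrm{Perv}_{\le w}$ and $\mathrm{Perv}_{\ge w}$ (the stalks of $i_*\mathscr{F}_0$ are either stalks of $\mathscr{F}_0$ or zero). It therefore suffices to treat open $j$. Using the description $j_{!*}\mathscr{F}_0 = \mathrm{Im}\bigl(\leftexp{p}{j_!}\mathscr{F}_0 \to \leftexp{p}{j_*}\mathscr{F}_0\bigr)$ in $\mathrm{Perv}(\mathscr{X}_0)$ from (\cite{LO3}, 6), we reduce to proving: (a) $\leftexp{p}{j_!}\mathscr{F}_0 \in \mathrm{Perv}_{\le w}$ for $\mathscr{F}_0 \in \mathrm{Perv}_{\le w}(\mathscr{U}_0)$, and dually (b) $\leftexp{p}{j_*}\mathscr{F}_0 \in \mathrm{Perv}_{\ge w}$ for $\mathscr{F}_0 \in \mathrm{Perv}_{\ge w}(\mathscr{U}_0)$. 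Given (a), $j_{!*}\mathscr{F}_0$ is a quotient in $\mathrm{Perv}(\mathscr{X}_0)$ of an object in $\mathrm{Perv}_{\le w}$, and part (i) finishes the $\le w$ half; dually for (b). Claims (a) and (b) are verified smooth-locally on $\mathscr{X}_0$ using Lemma \ref{L-pres}, reducing them to the scheme statements that $j_!$ preserves weights $\le w$ and $j_*$ preserves weights $\ge w$ for open immersions (\cite{BBD}, 5.1.14), combined with the weight-preservation of $\leftexp{p}{\tau}_{\le 0}$ and $\leftexp{p}{\tau}_{\ge 0}$ from (\cite{BBD}, 5.1.6), which, as noted in the text preceding Definition \ref{D3.1}, extends to stacks by smooth descent.

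The main obstacle lies in (i): one must verify that the $t$-exactness of $P^*[d]$ on derived categories really yields exactness on the abelian category $\mathrm{Perv}$, and one must track carefully how the shift $[d]$ interacts with Lemma \ref{L-pres} so that a weight bound on $X_0$ translates back cleanly to a weight bound on $\mathscr{X}_0$. Once these bookkeeping issues are settled, both (i) and (ii) rest squarely on the established scheme-theoretic results of \cite{BBD}.
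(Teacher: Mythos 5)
Your part (i) is essentially the paper's own argument: exactness of $P^*[d]$ on perverse sheaves, Lemma \ref{L-pres}, reduction by duality to the ``$\le w$'' case, and the $\iota$-mixed variant of (\cite{BBD}, 5.3.1); the only difference is that you spell out the shift bookkeeping that the paper leaves implicit. For part (ii) you take a genuinely different route. After the same reduction to an open immersion, the paper invokes the compatibility of $j_{!*}$ with smooth pullback, $P^*(j_{!*}\s F_0[d])\simeq j'_{!*}(P'^*\s F_0[d])$ from (\cite{LO3}, 6.2), and then quotes (\cite{BBD}, 5.3.2) directly on the presentation; you instead replay BBD's intrinsic argument at the stack level, writing $j_{!*}\s F_0$ as the image of $\leftexp{p}{j_!}\s F_0\to\leftexp{p}{j_*}\s F_0$ and combining part (i) with weight bounds for $\leftexp{p}{j_!}$ and $\leftexp{p}{j_*}$. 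That is a valid alternative and avoids (\cite{LO3}, 6.2), but it needs one more input than you acknowledge: your claims (a) and (b) require that perverse truncation/cohomology preserves the weight bounds $\le w$ and $\ge w$, and the reference you give is off --- (\cite{BBD}, 5.1.6), like the remark before Definition \ref{D3.1}, only gives stability of \emph{mixedness} under $\leftexp{p}{\tau}$, not of the conditions $D_{\le w}$, $D_{\ge w}$. The fact you actually need is (\cite{BBD}, 5.4.1), i.e. the paper's Theorem \ref{5.4.1}; since that theorem is proved using only Lemma \ref{L-pres} and (\cite{BBD}, 5.4.1), citing it here is not circular, but you should say so (or reorder the statements) explicitly. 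In short, the paper's proof is shorter because (\cite{LO3}, 6.2) lets it import (\cite{BBD}, 5.3.2) wholesale, while yours is more self-contained on the stack side at the cost of this extra truncation-and-weights input and of correcting the citation.
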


\begin{proof}
(i) Note that the variant for mixed perverse sheaves on stacks is given
in (\cite{LO3}, 9.3). Recall that for a morphism $u:\s F_0\to\s G_0$ of
perverse sheaves, with cone $K_0$ in $D^b_c(\s X_0,\overline
{\bb Q}_{\ell}),$ we have $\text{Ker}(u)=\leftexp{p}{\s
H}^{-1}K_0$ and $\text{Coker}(u)=\leftexp{p}{\s H}^0K_0.$
Let $P:X_0\to\s X_0$ be a presentation of relative dimension
$d.$ Since $P^*[d]$ commutes with $\leftexp{p}{\s H}^i,$
we see that $P^*[d]:\text{Perv}(\s X_0)\to\text{Perv}(X_0)$
is an exact functor. If $\s F_0'$ is a sub-object (resp.
quotient object) of $\s F_0,$ then $D\s F_0'$ is a quotient
object (resp. sub-object) of $D\s F_0,$ so by duality it
suffices to prove the $``\le w"$ part. This follows from
the exactness of $P^*[d]$ and the $\iota$-mixed variant
of (\cite{BBD}, 5.3.1).

(ii) For a closed immersion $i,$ we see that $i_*$ respects
$D_{\ge w}$ and $D_{\le w},$ so we may assume that $j$ is
an open immersion. We only need to consider the case for
$\text{Perv}_{\le w},$ since the case for $\text{Perv}_{\ge
w}$ follows from $j_{!*}D=Dj_{!*}.$

Let $P:X_0\to\s X_0$ be a presentation, and let the
following diagram be 2-Cartesian:
$$
\xymatrix@C=.8cm @R=.6cm{
U_0 \ar[r]^-{j'} \ar[d]_-{P'} & X_0 \ar[d]^-P \\
\s U_0 \ar[r]_-j & \s X_0.}
$$
For $\s F_0\in\text{Perv}_{\le w}(\s U_0),$ by
(\ref{L-pres}) it suffices to show that $P^*j_{!*}\s
F_0\in D_{\le w}(X_0,\overline{\bb Q}_{\ell}).$ Let
$d$ be the relative dimension of $P.$ By (\cite{LO3}, 6.2)
we have
$$
P^*j_{!*}\s F_0=(P^*(j_{!*}\s F_0)[d])[-d]
=j'_{!*}(P'^*\s F_0[d])[-d].
$$
Since $P'^*\s F_0\in D_{\le w},\ P'^*\s
F_0[d]\in D_{\le w+d},$ and by (\cite{BBD}, 5.3.2),
$j'_{!*}(P'^*\s F_0[d])\in\text{Perv}_{\le w+d},$
and by definition $P^*j_{!*}\s F_0\in D_{\le w}.$
\end{proof}

\begin{corollary}\label{5.3.4}\emph{(stack version of
(\cite{BBD}, 5.3.4))}
Every simple perverse sheaf $\s F_0$ on an algebraic
stack $\s X_0$ is $\iota$-pure.
\end{corollary}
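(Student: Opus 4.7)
My plan is to adapt the argument of (\cite{BBD}, 5.3.4): reduce $\s F_0$ to the intermediate extension of a shifted irreducible lisse sheaf on a smooth dense open substack of its support; show such a lisse sheaf is punctually $\iota$-pure using the structure theorem \ref{3.4.1}; and finally invoke \ref{5.3.2}~(ii) to propagate purity across $j_{!*}.$

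Since $\s F_0$ is simple, its support is an irreducible reduced closed substack $i:\s Z_0\hookrightarrow\s X_0.$ After shrinking, I may choose an essentially smooth dense open immersion $j:\s U_0\hookrightarrow\s Z_0$ such that $\s F_0|_{\s U_0}=\s L_0[d]$ for a single lisse sheaf $\s L_0$ on $\s U_0,$ where $d=\dim\s U_0.$ Using the intermediate extension formalism on stacks (\cite{LO3}, 6, 8), this identifies $\s F_0$ with $(ij)_{!*}(\s L_0[d]),$ and simplicity of $\s F_0$ as a perverse sheaf translates into irreducibility of $\s L_0$ as a lisse sheaf. Getting this identification right is the main obstacle I anticipate, but it parallels the scheme-theoretic story in (\cite{BBD}, 4.3).

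Next I would show that $\s L_0$ is punctually $\iota$-pure. By Lafforgue's theorem (recalled in the paragraph before \ref{D3.1}), $\s L_0$ is $\iota$-mixed. By \ref{3.4.1}~(i), the decomposition $\s L_0=\bigoplus_b\s L_0(b)$ combined with irreducibility forces all punctual $\iota$-weights of $\s L_0$ to lie in a single coset $b_0\in\bb R/\bb Z.$ Choose $c\in\overline{\bb Q}_\ell^*$ with $w_q(\iota c)\equiv -b_0\pmod{\bb Z}$ and twist $\s L_0$ by the pullback of $\overline{\bb Q}_\ell^{(c)}$ from $\text{Spec }\bb F_q$ to $\s U_0;$ since this tensor is invertible, the twisted sheaf remains irreducible and now has integer punctual weights. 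Applying \ref{3.4.1}~(ii) to the twist yields a weight filtration by lisse subsheaves; irreducibility collapses this filtration to a single step, so the twisted sheaf, and hence $\s L_0,$ is punctually $\iota$-pure.

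Finally, \ref{5.3.2}~(ii) applied in turn to the open immersion $j$ and the closed immersion $i$ shows that $\s F_0=(ij)_{!*}(\s L_0[d])$ is $\iota$-pure of the same weight as $\s L_0[d].$
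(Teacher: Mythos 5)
Your proposal is correct and follows essentially the same route as the paper: identify $\s F_0$ as the intermediate extension $j_{!*}L_0[d]$ of a shifted irreducible lisse sheaf on an essentially smooth substack (\cite{LO3}, 8.2ii), deduce punctual $\iota$-purity of $L_0$ from the structure theorem (\ref{3.4.1}) via Lafforgue's $\iota$-mixedness, the decomposition mod $\bb Z$, a rank-one twist and the weight filtration, and conclude with (\ref{5.3.2}ii). The paper compresses the middle step into a citation of (\ref{3.4.1}ii), so your spelled-out twist argument and the open/closed factorization of the immersion are just more explicit versions of the same proof.
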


\begin{proof}
By (\cite{LO3}, 8.2ii), $\s F_0\simeq j_{!*}L_0[d]$ for
some essentially smooth irreducible
substack $j:\s U_0\hookrightarrow\s X_0$ of dimension $d,$
and a simple lisse sheaf $L_0$ on $\s U_0,$ which is punctually
$\iota$-pure by (\ref{3.4.1}ii).
The result follows from (\ref{5.3.2}ii).
\end{proof}

\begin{theorem}\label{5.4.1}\emph{(stack version of
(\cite{BBD}, 5.4.1, 5.4.4))}
Let $K_0\in D_c^b(\s X_0,\overline{\bb Q}_{\ell}).$ Then $K_0$
has $\iota$-weights $\le w$ (resp. $\ge w$) if and only if
$\leftexp{p}{\s H}^iK_0$ has $\iota$-weights $\le
w+i$ (resp. $\ge w+i$), for each $i\in\bb Z.$ In
particular, $K_0$ is $\iota$-pure of weight $w$ if and
only if each $\leftexp{p}{\s H}^iK_0$ is
$\iota$-pure of weight $w+i.$
\end{theorem}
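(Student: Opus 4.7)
The plan is to deduce the theorem from its scheme analogue (\cite{BBD}, 5.4.1 and 5.4.4) by smooth descent. First, note that Verdier duality interchanges $D_{\le w}$ and $D_{\ge -w}$ by Definition~\ref{D3.1}, and turns $\leftexp{p}{\s H}^i$ into $\leftexp{p}{\s H}^{-i}$, so it is enough to prove the ``$\le w$'' half; the ``$\ge w$'' case follows by applying $D$ on both sides, and the $\iota$-purity statement then results from combining the two.

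Fix a smooth presentation $P: X_0 \to \s X_0$ of relative dimension $d$. By the definition of the perverse $t$-structure on a stack in \cite{LO3}, the shifted pullback $P^*[d]$ is $t$-exact, hence commutes with the formation of perverse cohomology:
$$
\leftexp{p}{\s H}^i(P^* K_0[d]) \;=\; P^*(\leftexp{p}{\s H}^i K_0)[d] \qquad (i\in\bb Z).
$$
This is precisely the interchange already used in the proof of (\ref{5.3.2}).

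Now apply Lemma~\ref{L-pres}: $K_0$ has $\iota$-weights $\le w$ if and only if $P^* K_0$ does, if and only if $P^* K_0[d]$ has $\iota$-weights $\le w+d$. Likewise, $\leftexp{p}{\s H}^i K_0$ has $\iota$-weights $\le w+i$ if and only if $P^*(\leftexp{p}{\s H}^i K_0)[d]$ has $\iota$-weights $\le w+i+d$. Applying (\cite{BBD}, 5.4.1) to the complex $P^* K_0[d]$ on the scheme $X_0$ with bound $w+d$ turns the first condition into the conjunction of the latter ones over all $i$, which is exactly the equivalence claimed on $\s X_0$.

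There is no substantive obstacle: the two key inputs---smooth-local detection of weights (Lemma~\ref{L-pres}) and the $t$-exactness of $P^*[d]$ built into the stacky perverse $t$-structure---are already in place, and Lafforgue's theorem ensures that the $\iota$-mixed hypothesis in the scheme version is automatic for Weil sheaves. The only technical point requiring care is the bookkeeping of the $d$-shift linking weight bounds on $\s X_0$ to weight bounds on $X_0$, in the same spirit as (\ref{5.3.2}ii).
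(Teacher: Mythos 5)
Your proof is correct and follows essentially the same route as the paper: reduce to the ``$\le w$'' case by Verdier duality, use Lemma~\ref{L-pres} to pass to a presentation, apply (\cite{BBD}, 5.4.1) on the scheme, and use the $t$-exactness of $P^*[d]$ to identify perverse cohomologies; the only difference is that you shift to $P^*K_0[d]$ before invoking BBD, while the paper applies BBD to $P^*K_0$ and adjusts the index $i-d$ afterwards, which is the same bookkeeping.
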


\begin{proof}
The case of $``\ge"$ follows from the case of $``\le"$ and
$\leftexp{p}{\s H}^i\circ D=D\circ\leftexp{p}{\s H}^{-i}.$
So we only need to show the case of $``\le".$

Let $P:X_0\to\s X_0$ be a presentation of relative
dimension $d.$ Then $K_0$ has $\iota$-weights $\le w$ if
and only if (\ref{L-pres}) $P^*K_0$ has $\iota$-weights
$\le w,$ if and only if (\cite{BBD}, 5.4.1) each
$\leftexp{p}{\s H}^i(P^*K_0)$ has $\iota$-weights
$\le w+i.$ We have $\leftexp{p}{\s H}^i(P^*K_0)=
\leftexp{p}{\s H}^i(P^*(K_0[-d])[d])=P^*
\leftexp{p}{\s H}^i(K_0[-d])[d]=P^*(\leftexp{p}
{\s H}^{i-d}K_0)[d],$ so $P^*(\leftexp{p}{\s
H}^{i-d}K_0),$ and hence $\leftexp{p}{\s
H}^{i-d}K_0,$ has $\iota$-weights $\le w+i-d.$
\end{proof}

The category $\text{Perv}(\s X_0)$ is artinian
and noetherian (\cite{LO3}, 8.2i). By the \textit{irreducible
constituents} of a perverse sheaf we mean its Jordan-H\"older
components.

\begin{definition}\label{D3.2}
Let $\s F_0$ be a perverse sheaf on an
algebraic stack $\s X_0,$ and let $\beta\in\bb{R/Z}.$
We say that \emph{$\s F_0$ has $\iota$-weights in $\beta,$}
if all irreducible constituents of $\s F_0,$ which are
$\iota$-pure by (\ref{5.3.4}), have $\iota$-weights in
the coset $\beta$ (in the sense of (\ref{D3.1})).
\end{definition}

Now we give the perverse sheaf version of (\ref{3.4.1}i,
ii). For (ii), the variant for mixed perverse sheaves
(which is the stack version of (\cite{BBD}, 5.3.5)) is
given in (\cite{LO3}, 9.2).

\begin{theorem}\label{5.3.5}
Let $\s F_0$ be a perverse sheaf on $\s X_0.$

(i) $\s F_0$ has a unique decomposition $\s
F_0=\bigoplus_{\beta\in\bb{R/Z}}\s F_0(\beta)$ into
perverse subsheaves, called the \emph{decomposition
according to the weights mod} $\bb Z,$ such that for
each $\beta,$ the $\iota$-weights of $\s F_0(\beta)$ are in
$\beta$ (in the sense of (\ref{D3.2})). This decomposition,
in which almost all the $\s F_0(\beta)$'s are zero, is functorial in
$\s F_0.$

(ii) If the $\iota$-weights of $\s F_0$ are integers
(\ref{D3.2}), then there exists a unique finite increasing
filtration $W$ of $\s F_0$ by perverse subsheaves, called
the \emph{weight filtration,} such that $\emph{Gr}^W_i\s
F_0$ is $\iota$-pure of weight $i,$ for each $i.$ Every morphism
between such perverse sheaves on $\s X_0$ is
strictly compatible with their weight filtrations.
\end{theorem}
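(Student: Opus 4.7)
The plan is to follow the proof of (\cite{BBD}, 5.3.5), adapted to the stack setting using the structural results proved above. The category $\text{Perv}(\s X_0)$ is Artinian and Noetherian by (\cite{LO3}, 8.2i), so every perverse sheaf $\s F_0$ admits a finite Jordan--H\"older filtration. By (\ref{5.3.4}) each simple subquotient is $\iota$-pure, and hence has a well-defined weight in $\bb R$ and a well-defined class in $\bb R/\bb Z$; this is what allows the BBD strategy to be carried over even without assuming $\s F_0$ is mixed.

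For part (i), define $\s F_0(\beta)\subseteq\s F_0$ to be the largest perverse subsheaf all of whose simple constituents lie in the coset $\beta$; existence follows from noetherianity and from the fact that the sum of two such subsheaves is again of this form. The theorem then reduces to the key vanishing
$$
\text{Ext}^1_{\s X_0}(\s H_0,\s G_0)=0
$$
whenever $\s H_0$ and $\s G_0$ have all simple constituents in disjoint cosets. Granting this, an induction on the length of a JH filtration of $\s F_0$ yields $\s F_0=\bigoplus_\beta\s F_0(\beta)$; functoriality follows from uniqueness, since any morphism sends $\s F_0(\beta)$ into $\s F_0'(\beta)$ because its image has simple constituents in $\beta$.

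For part (ii), first apply (i) to reduce to the case where every simple constituent of $\s F_0$ is $\iota$-pure of integer weight. Define $W_i\s F_0$ to be the largest perverse subsheaf of $\s F_0$ contained in $\text{Perv}_{\le i}(\s X_0)$; this is well defined by (\ref{5.3.2}i) and the ACC. The graded piece $\text{Gr}_i^W\s F_0$ lies in $\text{Perv}_{\le i}$ by (\ref{5.3.2}i), and maximality of $W_{i-1}\s F_0$ together with integrality of the weights forces $\text{Gr}_i^W\s F_0$ to contain no simple constituent of weight $\le i-1$; hence $\text{Gr}_i^W\s F_0$ is $\iota$-pure of weight $i$. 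Strict compatibility with morphisms follows from maximality: any $u:\s F_0\to\s F_0'$ sends $W_i\s F_0$ into $W_i\s F_0'$, so the induced map on graded pieces is a morphism between $\iota$-pure perverse sheaves of the same weight.

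The main obstacle is the $\text{Ext}^1_{\s X_0}$ vanishing. The plan is to use the Hochschild--Serre spectral sequence
$$
E_2^{p,q}=H^p\bigl(\text{Gal}(\bb F/\bb F_q),\text{Ext}^q_{\s X}(\s H,\s G)\bigr)\Longrightarrow\text{Ext}^{p+q}_{\s X_0}(\s H_0,\s G_0),
$$
whose five-term sequence reduces the vanishing to: (a) $\text{Hom}_{\s X}(\s H,\s G)=0$; and (b) $\text{Ext}^1_{\s X}(\s H,\s G)$ has no Frobenius-fixed vector. Reducing by long exact sequences and JH length to the case that $\s H_0,\s G_0$ are simple, hence $\iota$-pure of weights $w_H,w_G$ with $w_H-w_G\notin\bb Z$, (a) holds because a nonzero morphism over $\s X$ would produce a common simple constituent that is pure of both weights. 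For (b), the Frobenius eigenvalues on $\text{Ext}^i_{\s X}(\s H,\s G)$ all have $q$-weights in the coset $w_G-w_H+\bb Z$, which for $w_H-w_G\notin\bb Z$ does not contain $0$, so no eigenvalue can equal $1$. Establishing this last weight statement is the technical heart: one passes to a presentation $P:X_0\to\s X_0$ via (\ref{L-pres}) and reduces to the corresponding scheme-level estimate (\cite{BBD}, 5.1.14), combined with the stack-level weight bounds from (\cite{Sun}).
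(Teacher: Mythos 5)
Your overall strategy (reduce everything to an $Ext^1$-vanishing between perverse sheaves whose simple constituents have weights in disjoint cosets, resp.\ different integer weights, and then run the BBD 5.3.6 formalism) is the right skeleton, but the ``technical heart'' as you describe it does not go through, and this is precisely where the paper does something different. Theorem \ref{5.3.5} is stated for an \emph{arbitrary} algebraic stack, with no affine-stabilizer hypothesis; for this reason the paper first reduces to the case of a scheme by smooth descent of perverse sheaves ((\cite{LO3}, 7.1) for (i), and as in (\cite{LO3}, 9.2) for (ii)), so that only scheme-level Weil II estimates are ever needed. Your plan instead computes $Ext^i_{\s X}(\s H,\s G)$ directly on the stack and appeals to ``the stack-level weight bounds from (\cite{Sun})'' together with (\ref{L-pres}): but the weight bounds of \cite{Sun} (and (\ref{5.1.14}iii), (\ref{5.1.15})) require affine stabilizers and genuinely fail without them --- that is exactly the content of the $BE$ counterexample in \S\ref{sec-counter-ex} --- while (\ref{L-pres}) only concerns \emph{punctual} weights of a complex under pullback by a presentation and says nothing about the global groups $Ext^i_{\s X}(\s H,\s G)=H^i$ of $a_*R\s Hom$, which are not computed from a single chart (one would need cohomological descent with an unbounded spectral sequence). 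Moreover, even after reducing to a scheme, the statement that the weights of $Ext^i(\s H,\s G)$ lie in the coset $w_G-w_H+\bb Z$ is itself the point that needs proof; the paper proves it via the twisting trick: write the simple perverse sheaves as $j_{!*}$ of irreducible lisse sheaves, twist by $b,b'$ chosen by (\cite{Del2}, 1.3.6) so that the determinants have finite order, invoke Lafforgue to get punctual purity of weight $0$, and then use mixedness of $a_*R\s Hom$ (\cite{Del2}, 6.1.11) to see that $a_*R\s Hom(K_0,L_0)$ is a $(b/b')$-twist of a mixed complex, hence has no integer weights. You assert the coset statement without an argument of this kind.

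There is also a gap internal to your part (ii): maximality of $W_{i-1}\s F_0$ plus integrality of weights does \emph{not} force $\mathrm{Gr}^W_i\s F_0$ to be pure. A simple constituent of weight $\le i-1$ could occur only as a quotient of $\mathrm{Gr}^W_i\s F_0$ (a non-split extension of a lower-weight quotient by a weight-$i$ subobject), and such a constituent cannot be used to enlarge $W_{i-1}$, so no contradiction with maximality arises. To exclude this configuration you need the vanishing $Ext^1(\s G_0,\s F_0)=0$ for $\iota$-pure simple perverse sheaves with $w(\s F_0)>w(\s G_0)$ --- the stack version of (\cite{BBD}, 5.1.15(ii)) --- and this is not covered by your coset sublemma (the weights here differ by an integer). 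In the paper this vanishing is again obtained only after descent to a scheme, via (\ref{5.1.15}ii) applied there; on a general stack without affine stabilizers it is not available by the tools you cite.
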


\begin{proof}
(i) By descent theory (\cite{LO3}, 7.1) we reduce to the case
where $\s X_0=X_0$ is a scheme. One can further replace $X_0$ by an open
affine covering, and assume $X_0$ is separated.

\begin{sublemma}
Let $K_0$ and $L_0$ be $\iota$-pure complexes in
$D^b_c(X_0,\overline{\bb Q}_{\ell})$ of $\iota$-weights $w$
and $w',$ respectively, and assume $w-w'\notin\bb Z.$ Then
$Ext^n(K_0,L_0)=0$ for all $n.$
\end{sublemma}

\begin{proof}
By (\ref{5.4.1}), $\leftexp{p}{\tau}_{\le i}K_0$ and
$\leftexp{p}{\s H}^iK_0$ are $\iota$-pure, of $\iota$-weights
$w$ and $w+i$ respectively. Since $RHom(K_0,L_0)$ is a
triangulated functor in both $K_0$ and $L_0,$ we may
assume they are both perverse sheaves, and hence simple
perverse sheaves (\ref{5.3.2}i).

To show $Ext^n(K_0,L_0)=0,$ it suffices to show, by (\cite
{BBD}, 5.1.2.5), that the $\iota$-weights of $Ext^n(K,L)$
are not integers, for all $n.$ Therefore, we may make a finite
extension of the base field $\bb F_q.$ By (\cite{BBD}, 4.3.1ii)
we have $K_0=j_{!*}F_0[d]$ (resp. $L_0=i_{!*}G_0[d']$) for
some irreducible smooth subscheme (since we can take a finite
base extension) $j:U_0\hookrightarrow X_0$ of dimension $d$ (resp.
$i:V_0\hookrightarrow X_0$ of dimension $d'$), and for some
irreducible lisse sheaf $F_0$ on $U_0$ (resp. $G_0$ on
$V_0$). The sheaf $F_0=j^*K_0[-d]$ is $\iota$-mixed (or use Lafforgue's
result), hence punctually $\iota$-pure by (\cite{Del2}, 3.4.1ii), and
therefore $\iota$-pure by (\cite{Del2}, 6.2.5b). By (\cite
{BBD}, 5.3.2), the punctual $\iota$-weight of $F_0$ is
$w-d.$ By (\cite{Del2}, 1.3.6), there exists a number $b\in\overline
{\bb Q}_{\ell}^*$ such that the determinant of the lisse
sheaf $F_0^{(b)}$ deduced from $F_0$ by twist (\ref{torsion}) has
finite order, therefore, by Lafforgue's result, $F_0^{(b)}$ is
punctually pure of weight 0. The same is true for $G_0^{(b')}$
for some $b'\in\overline{\bb Q}_{\ell}^*.$ We see that
$w_q(\iota b)=d-w$ and $w_q(\iota b')=d'-w'.$

Then we have
$$
R\s Hom(K_0,L_0)=R\s Hom(j_{!*}F_0^{(b)}[d],i_{!*}G_0^{(b')}
[d'])^{(b/b')}
$$
and, by the projection formula,
$$
a_*R\s Hom(K_0,L_0)=\big(a_*R\s
Hom(j_{!*}F_0^{(b)}[d],i_{!*}G_0^{(b')}[d'])\big)^{(b/b')},
$$
where $a:X_0\to\text{Spec }\bb F_q$ is the structural morphism. By
(\cite{Del2}, 6.1.11), the complex $a_*R\s
Hom(j_{!*}F_0^{(b)}[d],i_{!*}G_0^{(b')}[d'])$ is mixed, hence
$\s H^n(a_*R\s Hom(K_0,L_0)),$ whose underlying vector space is
$Ext^n(K,L),$ does not have integer punctual $\iota$-weights.
\end{proof}

For every $\beta\in\bb{R/Z},$ we apply (\cite{BBD}, 5.3.6) to
$\text{Perv}(X_0),$ taking $S^+$ (resp. $S^-$) to be the set of
isomorphism classes of simple perverse sheaves
(hence $\iota$-pure) of $\iota$-weights not in $\beta$ (resp.
in $\beta$). Then we get a unique sub-object $\s F_0(\beta)$
with $\iota$-weights in $\beta$ (\ref{D3.2}), such that $\s
F_0/\s F_0(\beta)$ has $\iota$-weights not in $\beta,$ and
$\s F_0(\beta)$ is functorial in $\s F_0.$ This extension
splits since $Ext^1=0.$ By induction on length we get the
decomposition, which is unique and functorial.

(ii) As in (\cite{LO3}, 9.2), we may assume $\s X_0=X_0$
is a scheme. The proof in (\cite{BBD}, 5.3.5) still applies.
Namely, by (\ref{5.1.15}ii), if $\s F_0$ and $\s G_0$ are
$\iota$-pure simple perverse sheaves on $X_0,$ of
$\iota$-weights $f$ and $g$ respectively, with $f>g,$ then
$Ext^1(\s G_0,\s F_0)=0.$ Then apply (\cite{BBD}, 5.3.6) for each integer $i,$ by
taking $S^+$ (resp. $S^-$) to be the set of isomorphism
classes of $\iota$-pure simple perverse sheaves on $X_0$ of
$\iota$-weights $>i$ (resp. $\le i$).
\end{proof}

\begin{blank}\label{affine-stab}
Let $k$ be a field and let $\mathcal X$ be a $k$-algebraic
stack. We say that $\mathcal X$ has \textit{affine
stabilizers} if for every $x\in\mathcal X(\overline{k}),$
the group scheme $\text{Aut}_x$ is affine. Since being
affine is fpqc local on the base, we see that for any finite
field extension $k'/k$ and any $x\in\mathcal X(k'),$ the group
scheme $\text{Aut}_x$ over $k'$ is affine.
\end{blank}

\begin{proposition}\label{5.1.14}\emph{(stack version of
(\cite{BBD}, 5.1.14))}
(i) The Verdier dualizing functor $D_{\s X_0}$ interchanges $D_{\le w}$
and $D_{\ge-w}.$

(ii) For every morphism $f$ of $\bb F_q$-algebraic
stacks, $f^*$ respects $D_{\le w}$ and $f^!$ respects
$D_{\ge w}.$

(iii) For every morphism $f:\s X_0\to\s Y_0,$ where $\s
X_0$ is an $\bb F_q$-algebraic stack with affine stabilizers,
$f_!$ respects $D_{\le w}^-$ and $f_*$ respects
$D_{\ge w}^+.$

(iv) $\otimes$ takes $D_{\le w}^-\times D_{\le w'}^-$
into $D_{\le w+w'}^-.$

(v) $R\s Hom$ takes $D_{\le w}^-\times D_{\ge
w'}^+$ into $D_{\ge w'-w}^+.$
\end{proposition}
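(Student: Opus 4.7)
The overall plan is to reduce each of parts (i), (ii), (iv), (v) to the scheme analog (\cite{BBD}, 5.1.14) by pulling back along a presentation and invoking Lemma~\ref{L-pres}, while isolating the nontrivial stack-theoretic content in part (iii), where the affine-stabilizer hypothesis enters via the weight bound for compactly supported cohomology (\cite{Sun}, 1.4).

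Part (i) should come out immediately from Definition~\ref{D3.1}: by definition $K_0\in D_{\ge w}$ means $DK_0\in D_{\le -w}$, and since $D$ is an involution, it restricts to mutually inverse equivalences $D_{\le w}\leftrightarrow D_{\ge -w}$. For (ii), I would verify the $f^*$ statement by pulling back along a presentation $P:X_0\to\s X_0$, forming a Cartesian square with a presentation $Q:Y_0\to\s Y_0$, and appealing to the scheme case; Lemma~\ref{L-pres} then transports the conclusion back to $\s X_0$. The $f^!$ assertion follows from the $f^*$ part, (i), and the identity $f^!=Df^*D$. For (iv), since $P^*$ commutes with $\otimes$, the bound on weights of $K_0\otimes L_0$ reduces directly to the scheme case. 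For (v), I would use $R\s Hom(K_0,L_0)=D(K_0\otimes DL_0)$ to deduce it from (iv) and (i).

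The crux is (iii). My plan is first to reduce to the case where the target is a scheme: choosing a presentation $Q:Y_0\to\s Y_0$, smooth base change gives $Q^*f_!K_0\simeq f'_!P^*K_0$ for the Cartesian pullback $f':\s X'_0\to Y_0$, the stack $\s X'_0$ inherits affine stabilizers (affineness is stable under base change), and $P^*K_0$ still has weights $\le w$. So I may assume $\s Y_0=Y_0$ is a scheme. Then weights can be checked pointwise: the stalk of $\s H^n f_!K_0$ at a geometric point $y$ is $H^n_c(\s X_y,K|_{\s X_y})$. The truncation spectral sequence
$$H^i_c(\s X_y,\s H^jK)\Longrightarrow H^{i+j}_c(\s X_y,K)$$
reduces the question to bounding weights of $H^i_c(\s X_y,\s F)$ for a single sheaf $\s F$ with weights $\le v$, and this is precisely (\cite{Sun}, 1.4), applicable because $\s X_y$ is a finite-type stack with affine stabilizers over the residue field at $y$. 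The $f_*$ assertion then follows formally from the $f_!$ case, (i), and the identity $f_*=Df_!D$ provided by Verdier duality.

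The hard part I expect will be (iii), specifically two bookkeeping issues: ensuring convergence of the truncation spectral sequence in $D^-_c$ rather than merely $D^b_c$ (so that one can assemble the weight bound for each $\s H^jK$ into a weight bound for the abutment), and verifying that both the base-changed stack $\s X'_0$ and each geometric fiber $\s X'_y$ satisfy the hypotheses of (\cite{Sun}, 1.4). Once (iii) is established, parts (i), (ii), (iv), (v) are essentially formal.
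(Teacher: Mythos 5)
Your proposal is correct and follows essentially the same route as the paper: (i), (ii), (iv), (v) are treated as formal consequences of the definitions, duality, and the scheme case, while (iii) is proved by noting that the fibers $f^{-1}(y)$ inherit affine stabilizers, checking weights of $f_!K_0$ fiberwise via the spectral sequence $H^i_c(f^{-1}(\overline{y}),\s H^jK)\Rightarrow H^{i+j}_c(f^{-1}(\overline{y}),K)$ together with (\cite{Sun}, 1.4), and deducing the $f_*$ statement from the $f_!$ statement by duality. The only difference is that you insert an (unneeded but harmless) reduction to a scheme target and flag the convergence bookkeeping explicitly, which the paper leaves implicit.
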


\begin{proof}
(i), (ii) and (iv) are clear, and (v) follows from (iv).
For (iii), if $\mathscr X_0$ has affine stabilizers,
so are all fibers $f^{-1}(y),$ for $y\in\s
Y_0(\bb F_{q^v}),$ and the claim for $f_!$ follows
from the spectral sequence
$$
H^i_c(f^{-1}(\overline{y}),\s H^jK)\Longrightarrow
H^{i+j}_c(f^{-1}(\overline{y}),K)
$$
and (\cite{Sun}, 1.4), and the claim for $f_*$ follows
from the claim for $f_!.$
\end{proof}

\begin{corollary}\label{5.1.15}\emph{(stack version of
(\cite{BBD}, 5.1.15))}
Let $\s X_0$ be an $\bb F_q$-algebraic stack with affine
stabilizers, with structural map $a:\s X_0\to\emph{Spec
}\bb F_q.$ Let $K_0$ (resp. $L_0$) be in $D_{\le
w}^-(\s X_0,\overline{\bb Q}_{\ell})$ (resp.
$D_{>w}^+(\s X_0,\overline{\bb Q}_{\ell})$) for
some real number $w.$ Then

(i) $a_*R\s Hom(K_0,L_0)$ is in $D_{>0}^+(\emph{Spec }\bb
F_q,\overline{\bb Q}_{\ell}).$

(ii) $Ext^i(K_0,L_0)=0$ for $i>0.$

If $L_0\in D_{\ge w}^+,$ then $a_*R\s Hom(K_0,L_0)$
is in $D_{\ge0}^+,$ and we have

(iii) $Ext^i(K,L)^F=0$ for $i>0.$ Here $F$ is the Frobenius
(\ref{frob}). In particular, for
$i>0,$ the canonical morphism $Ext^i(K_0,L_0)\to Ext^i(K,L)$ is zero.
\end{corollary}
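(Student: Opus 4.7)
The plan is to deduce all three claims from (\ref{5.1.14}) by reducing to statements about the complex $M_0 := a_*R\s Hom(K_0,L_0)$ on $\text{Spec }\bb F_q$. First, (\ref{5.1.14})(v) applied to $K_0\in D_{\le w}^-$ and $L_0\in D_{>w}^+$ gives $R\s Hom(K_0,L_0)\in D_{>0}^+(\s X_0,\overline{\bb Q}_{\ell})$; then the hypothesis that $\s X_0$ has affine stabilizers makes (\ref{5.1.14})(iii) applicable to the structural map $a$, yielding $M_0\in D_{>0}^+(\text{Spec }\bb F_q,\overline{\bb Q}_{\ell})$. This is (i). Under the weaker assumption $L_0\in D_{\ge w}^+$, the very same chain of inferences produces $M_0\in D_{\ge 0}^+$.

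To extract (ii) and (iii), I would use that $\text{Gal}(\bb F/\bb F_q)\simeq\widehat{\bb Z}$ has cohomological dimension one, so the hypercohomology spectral sequence for $M_0$ on $\text{Spec }\bb F_q$ collapses to short exact sequences
$$0\to H^1(\bb F_q,\s H^{i-1}M_0)\to Ext^i(K_0,L_0)\to H^0(\bb F_q,\s H^i M_0)\to 0,$$
in which $\s H^j M_0$ is identified with $Ext^j(K,L)$ as a Galois representation by base change along $\text{Spec }\bb F\to\text{Spec }\bb F_q$. The key weight-theoretic observation is that any continuous $\overline{\bb Q}_{\ell}$-representation $V$ of $\text{Gal}(\bb F/\bb F_q)$ whose $\iota$-weights are all strictly positive has $F-1$ invertible on $V$, so $V^F=V_F=0$. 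For (ii), every $\s H^j M_0$ with $j\ge 0$ has weights $>j\ge 0$; hence for $i\ge 1$ both end terms of the SES vanish, giving $Ext^i(K_0,L_0)=0$. For (iii), $\s H^i M_0$ has weights $\ge i\ge 1>0$ when $i\ge 1$, so $H^0(\bb F_q,\s H^i M_0)=Ext^i(K,L)^F$ vanishes; the canonical morphism $Ext^i(K_0,L_0)\to Ext^i(K,L)$ factors as the edge map followed by the inclusion $Ext^i(K,L)^F\hookrightarrow Ext^i(K,L)$, and therefore vanishes too.

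The main obstacle, as I see it, is justifying the identification $\s H^i(a_*R\s Hom(K_0,L_0))\simeq Ext^i(K,L)$ as Galois representations. For schemes this is essentially (\cite{BBD}, 5.1.2.5); for algebraic stacks it is the corresponding base change along the pro-\'etale cover $\text{Spec }\bb F\to\text{Spec }\bb F_q$, which can be obtained either by descending along a smooth presentation $P:X_0\to\s X_0$ or directly by chasing the Leray--Hochschild--Serre spectral sequence for $a$. Granted this, the entire argument is a direct transcription of the BBD proof, now permissible thanks to the six-functor weight bounds of (\ref{5.1.14}) for stacks with affine stabilizers.
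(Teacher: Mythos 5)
Your proposal is correct and is essentially the paper's own argument: the paper simply says the proof is the same as (\cite{BBD}, 5.1.15) once one has the stability results of (\ref{5.1.14}) for stacks with affine stabilizers, and what you wrote out (weight bound on $a_*R\s Hom(K_0,L_0)$ via (\ref{5.1.14})(iii),(v), then the cohomological-dimension-one exact sequence on $\text{Spec }\bb F_q$ and the vanishing of $F$-(co)invariants in strictly positive weight) is exactly that BBD argument transcribed.
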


The proof is the same as (\cite{BBD}, 5.1.15), using the
above stability result for algebraic stacks with affine stabilizers.

The following is the perverse sheaf version of
(\ref{3.4.1}iii).

\begin{theorem}\label{5.3.8}\emph{(stack version of
(\cite{BBD}, 5.3.8))}
Let $\s X_0$ be an $\bb F_q$-algebraic stack
with affine stabilizers. Then every $\iota$-pure
perverse sheaf $\s F_0$ on $\s X_0$ is
geometrically semi-simple (i.e. $\s F$ is
semi-simple), hence $\s F$ is a direct sum of
perverse sheaves of the form $j_{!*}L[d_{\s U}],$
for inclusions $j:\s U\hookrightarrow\s X$ of
$d_{\s U}$-dimensional irreducible smooth
substacks, and for irreducible lisse sheaves $L$ on $\s U.$
\end{theorem}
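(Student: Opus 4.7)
The plan is to induct on the length of $\s F_0$ as a perverse sheaf on $\s X_0$—which is finite since $\text{Perv}(\s X_0)$ is artinian and noetherian (\cite{LO3}, 8.2i)—and to convert the purity hypothesis into geometric splittings of extensions via the Frobenius-equivariant $\text{Ext}^1$-vanishing in (\ref{5.1.15}iii).

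For the \textbf{base case} I would assume $\s F_0$ is simple. By (\cite{LO3}, 8.2ii) we have $\s F_0 \simeq j_{!*}L_0[d]$ for some essentially smooth irreducible substack $j:\s U_0 \hookrightarrow \s X_0$ of dimension $d$ and some simple lisse sheaf $L_0$ on $\s U_0,$ which is automatically punctually $\iota$-pure by (\ref{3.4.1}ii). Shrinking $\s U_0$ to a smooth (hence normal) dense open substack if necessary—this does not affect $j_{!*}L_0[d]$—I can invoke (\ref{3.4.1}iii) to conclude that $L$ on $\s U$ is semi-simple, say $L = \bigoplus_i L_i$ with irreducible lisse $L_i.$ Since the base change $\s X \to \s X_0$ is smooth, $j_{!*}$ commutes with it, and we obtain $\s F \simeq j_{!*}L[d] = \bigoplus_i j_{!*}L_i[d],$ a direct sum of the form required in the statement.

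For the \textbf{inductive step}, with $\s F_0$ of length $\ge 2,$ I would pick any non-zero proper sub-perverse sheaf $\s G_0 \subset \s F_0$ and set $\s H_0 := \s F_0/\s G_0.$ By (\ref{5.3.2}i), both $\s G_0$ and $\s H_0$ are $\iota$-pure of the same weight $w$ as $\s F_0.$ The short exact sequence $0 \to \s G_0 \to \s F_0 \to \s H_0 \to 0$ defines a class in $\text{Ext}^1_{\s X_0}(\s H_0, \s G_0);$ since $\s H_0 \in D_{\le w}^-$ and $\s G_0 \in D_{\ge w}^+,$ (\ref{5.1.15}iii) (where the affine-stabilizer hypothesis enters) implies that the canonical map $\text{Ext}^1_{\s X_0}(\s H_0, \s G_0) \to \text{Ext}^1_{\s X}(\s H, \s G)$ is zero. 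Hence $\s F \simeq \s G \oplus \s H$ on $\s X,$ and since $\s G_0,\ \s H_0$ both have strictly smaller length, the inductive hypothesis applied to them completes the proof.

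The \textbf{main obstacle} has already been surmounted in (\ref{5.1.15}iii) and is precisely what forces the affine-stabilizer hypothesis: it ultimately rests on the upper bound of weights for compactly supported cohomology of stacks (\cite{Sun}, 1.4). Granted that, the scheme-level argument of (\cite{BBD}, 5.3.8) translates essentially verbatim, the one substantive replacement being that the lisse semi-simplicity input (\cite{Del2}, 3.4.1iii) is supplied by its stack analogue (\ref{3.4.1}iii).
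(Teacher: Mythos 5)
Your argument is correct, but it is organized differently from the paper's proof. The paper does not induct on length: it takes $\s F'$ to be the sum of all simple perverse subsheaves of $\s F$ \emph{over} $\s X$ (the socle), observes that it is Frobenius-stable and hence descends to $\s F_0'\subset\s F_0$ (BBD 5.1.2 for stacks), splits the single extension $0\to\s F'\to\s F\to\s F''\to0$ geometrically by (\ref{5.1.15}iii), and concludes $\s F''=0$ by maximality of $\s F'$. That route never needs to know what happens to a \emph{simple} $\s F_0$ after extension of scalars, so it uses neither (\ref{3.4.1}iii) nor the classification (\cite{LO3}, 8.2ii) for the semisimplicity statement itself. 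Your route inducts on the length of $\s F_0$ over $\bb F_q$, using the same key vanishing (\ref{5.1.15}iii) to split each geometric extension, but it must then handle the base case of a simple $\s F_0$ separately, which you do via $\s F_0\simeq j_{!*}L_0[d]$, the geometric semisimplicity of pure lisse sheaves on normal stacks (\ref{3.4.1}iii), and the compatibility of $j_{!*}$ with extension of scalars; in exchange you avoid the descent-of-the-socle step. Two small imprecisions, neither fatal: the base change $\s X\to\s X_0$ is not smooth (it is a filtered limit of \'etale maps), though $j_{!*}$ does commute with it by the usual limit argument; and an essentially smooth $\s U_0$ need not contain a smooth dense open, but since $\bb F_q$ is perfect you may replace $\s U_0$ by its reduction, which is smooth (hence normal) and carries the same sheaves. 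Also note $\s U$ may become reducible over $\bb F$, so the final decomposition should be taken component by component (or, as the paper does, just invoke (\cite{LO3}, 8.2ii) over $\bb F$ once semisimplicity is known).
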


\begin{proof}
Let $\s F'$ be the sum in $\s F$ of simple
perverse subsheaves; it is a direct sum, and is the
largest semi-simple perverse subsheaf of $\s F.$
Then $\s F'$ is stable under Frobenius, hence
descends to a perverse subsheaf $\s F_0'\subset
\s F_0$ ((\cite{BBD}, 5.1.2) holds for stacks also).
Let $\s F_0''=\s F_0/\s F_0'.$ By (\ref{5.1.15}iii), the extension
$$
\xymatrix@C=.5cm{
0 \ar[r] & \s F' \ar[r] & \s F \ar[r] & \s F'' \ar[r] & 0}
$$
splits, because $\s F_0'$ and $\s F_0''$ have
the same weight (\cite{LO3}, 9.3). Then $\s F''$
must be zero, since otherwise it contains a simple
perverse subsheaf, and this contradicts the maximality of
$\s F'.$ Therefore $\s F=\s F'$ is semi-simple. The other
claim follows from (\cite{LO3}, 8.2ii): we may replace
$\s U$ by $\s U_{\text{red}}$ and hence assume that it is smooth.
\end{proof}

\begin{theorem}\label{5.4.5}\emph{(stack version of
(\cite{BBD}, 5.4.5))}
Let $\s X_0$ be an $\bb F_q$-algebraic stack
with affine stabilizers, and let $K_0\in D_c^b(\s X_0,\overline{\bb
Q}_{\ell})$ be an $\iota$-pure complex. Then $K$ on $\s X$ is
isomorphic non-canonically to the direct sum of the shifted perverse
cohomology sheaves $(\leftexp{p}{\s H}^iK)[-i].$
\end{theorem}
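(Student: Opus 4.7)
The plan is to adapt the argument of \cite{BBD}, 5.4.5 to stacks by inducting on the number of nonzero perverse cohomology sheaves of $K_0$, reducing each inductive step to the geometric splitting of one distinguished truncation triangle. The stack-specific ingredient is (\ref{5.1.15}iii), which packages the affine-stabilizer hypothesis through the weight bound of \cite{Sun}, 1.4; without it the pullback of the obstruction class need not vanish, exactly as in the $BE$ counterexample of the introduction.

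Since $K_0$ is $\iota$-pure of weight $w$, (\ref{5.4.1}) immediately gives that each $\leftexp{p}{\s H}^iK_0$ is $\iota$-pure of weight $w+i$ as a perverse sheaf; consequently the complex $(\leftexp{p}{\s H}^iK_0)[-i]$ is $\iota$-pure of weight $w$, and so is every perverse truncation $\leftexp{p}{\tau}_{\le j}K_0$ and $\leftexp{p}{\tau}_{\ge j}K_0$ (again by (\ref{5.4.1}), since their perverse cohomology sheaves are a subset of those of $K_0$).

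I would then induct on the number of indices $i$ with $\leftexp{p}{\s H}^iK_0\ne 0$; the case of at most one such index is immediate. Otherwise let $i$ be the smallest such index, set $A_0=(\leftexp{p}{\s H}^iK_0)[-i]$ and $K_0'=\leftexp{p}{\tau}_{\ge i+1}K_0$, and consider the canonical distinguished triangle
$$
A_0\to K_0\to K_0'\to A_0[1],
$$
whose boundary class $\epsilon_0\in Ext^1(K_0',A_0)$ is the obstruction to splitting. Both $A_0$ and $K_0'$ are $\iota$-pure of weight $w$ by the previous paragraph, and $K_0'$ has strictly fewer nonzero perverse cohomology sheaves than $K_0$. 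Applying (\ref{5.1.15}iii) with $K_0\leftarrow K_0'\in D_{\le w}^-$ and $L_0\leftarrow A_0\in D_{\ge w}^+$ yields $Ext^1(K',A)^F=0$; since the pullback $Ext^1(K_0',A_0)\to Ext^1(K',A)$ has image in the Frobenius invariants, $\epsilon_0$ vanishes on $\s X$, and the pulled-back triangle splits to give $K\simeq A\oplus K'$. Applying the inductive hypothesis to $K_0'$ gives $K'\simeq\bigoplus_{j\ge i+1}(\leftexp{p}{\s H}^jK)[-j]$, and combining with $A=(\leftexp{p}{\s H}^iK)[-i]$ produces the claimed decomposition.

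The main obstacle is exactly the geometric vanishing $Ext^1(K',A)^F=0$: for schemes this is a routine weight computation, but on stacks it rests on (\ref{5.1.14}iii), which in turn requires the affine-stabilizer hypothesis to bound weights under $f_!$ and $f_*$ via \cite{Sun}, 1.4. This is precisely the failure point exhibited by Drinfeld's $BE$ example in the introduction, and is why the affine-stabilizer condition cannot be dropped.
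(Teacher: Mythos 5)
Your proof is correct and is essentially the paper's argument: both rest on (\ref{5.4.1}) to see that the perverse truncations and shifted perverse cohomologies are $\iota$-pure of the same weight, and on (\ref{5.1.15}iii) to kill the obstruction class geometrically so the truncation triangle splits, followed by induction. The only cosmetic difference is that you split off the lowest perverse cohomology via $A_0\to K_0\to\leftexp{p}{\tau}_{\ge i+1}K_0$, whereas the paper splits off the top one via $\leftexp{p}{\tau}_{<i}K_0\to\leftexp{p}{\tau}_{\le i}K_0\to(\leftexp{p}{\s H}^iK_0)[-i]$.
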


\begin{proof}
By (\ref{5.4.1}), both $\leftexp{p}{\tau}_{<i}K_0$ and
$(\leftexp{p}{\s H}^iK_0)[-i]$ are $\iota$-pure of
the same weight as that of $K_0.$ Therefore, by
(\ref{5.1.15}iii), the exact triangle
$$
\xymatrix@C=.5cm{
\leftexp{p}{\tau}_{<i}K_0 \ar[r] & \leftexp{p}{\tau}_{\le
i}K_0 \ar[r] & (\leftexp{p}{\s H}^iK_0)[-i] \ar[r]
&}
$$
geometrically splits, i.e. we have
$$
\leftexp{p}{\tau}_{\le i}K\simeq\leftexp{p}{\tau}_{<i}K
\oplus(\leftexp{p}{\s H}^iK)[-i],
$$
and the result follows by induction.
\end{proof}

\begin{blank}
\textit{Proof of theorem (\ref{main-thm}).} For the second claim,
we may assume that $K_0$ is an irreducible perverse sheaf, hence is $\iota$-pure (\ref{5.3.4}), therefore it follows from the first one.
For the first claim, by (\cite{Ols1}, 5.17) and (\ref{5.1.14} iii), we see that $f_*K_0$ is $\iota$-pure, hence by (\ref{5.4.5}) we have
$$
f_*K\simeq\bigoplus_{i\in\bb Z}(\leftexp{p}{R}^if_*K)[-i].
$$
By (\ref{5.4.1}), each $\leftexp{p}{R}^if_*K_0$ is also $\iota$-pure,
therefore it is geometrically semi-simple by (\ref{5.3.8}).
\end{blank}

\begin{remark}
In order to generalize the decomposition theorem to algebraic
stacks over $\bb C,$ one needs some foundational results, such as
the generic base change theorem for $f_*$ on stacks, the comparison
between the adic derived category and the topological derived
category of a $\bb C$-algebraic stack, and so on. We only give the
statement of the decomposition theorem for $\bb C$-stacks here,
and we will publish the details of the proof somewhere else.
\end{remark}

\begin{theorem}\label{6.2.5}\emph{(stack version of
(\cite{BBD}, 6.2.5))}
Let $f:\mathcal X\to\mathcal Y$ be a proper morphism of
finite diagonal between $\bb C$-algebraic stacks with
affine stabilizers. If $K\in\s D_c^b(\mathcal X^{\emph{an}},
\bb C)$ is semi-simple of geometric origin, then $f^{\emph{an}}_*K$
is also bounded, and is
semi-simple of geometric origin on $\mathcal Y^{\text{an}}.$
\end{theorem}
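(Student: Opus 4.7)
The plan is to reduce to the finite-field version, Theorem (\ref{main-thm}), by the standard Beilinson-Bernstein-Deligne-Gabber spreading-out argument (\cite{BBD}, 6.2.5), with the scheme-level inputs replaced by their stack analogues developed above.

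First, since $K$ is of geometric origin, one unravels its construction through the six operations applied to constant sheaves on various $\bb C$-stacks. All the data involved are finitely presented over $\bb C$, so they descend to a finitely generated $\bb Z$-subalgebra $A \subset \bb C$: one chooses models $f_A \colon \s X_A \to \s Y_A$ and an $\ell$-adic complex $K_A$ on $\s X_A$, together with models of all ancillary stacks, morphisms, and intermediate complexes. The comparison between the $\ell$-adic and topological derived categories for a $\bb C$-stack (one of the foundational results flagged in the preceding remark) matches the pullback to $\bb C$ with the original $(f,K)$. Now apply the generic base change theorem for $f_*$ on stacks (the other flagged foundational input) to shrink $\text{Spec}\, A$ so that the formation of $(f_A)_* K_A$ commutes with arbitrary base change, and then specialize to a closed point $\text{Spec}\, \bb F_q \hookrightarrow \text{Spec}\, A$. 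This produces an $\bb F_q$-morphism $f_0 \colon \s X_0 \to \s Y_0$ of stacks with affine stabilizers, proper of finite diagonal, together with a geometric-origin complex $K_0$ on $\s X_0$.

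Second, each Jordan-H\"older constituent of the perverse cohomology of $K_0$ is $\iota$-pure by (\ref{5.3.4}), and the semi-simplicity of $K$ (propagating through descent) ensures that $K_0$ itself decomposes as a direct sum of shifted $\iota$-pure perverse sheaves. Applying Theorem (\ref{main-thm}) summand by summand yields the desired geometric decomposition of $(f_0)_* K_0$ into a direct sum of shifted semi-simple perverse sheaves on $\s Y_0$. Via proper base change along $\text{Spec}\, \bb C \to \text{Spec}\, A$ and the adic/topological comparison, this splitting and the accompanying semi-simplicity transfer to $f^{\text{an}}_* K$ in $D_c^b(\s Y^{\text{an}}, \bb C)$; boundedness follows because the whole specialization picture is constructible on $\text{Spec}\, A$ after the shrinking above, and the geometric-origin property is preserved throughout the construction.

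The main obstacle is not the decomposition-theorem side of the argument, which is already settled by Theorem (\ref{main-thm}), but rather the two foundational inputs highlighted in the remark: a generic base change theorem for $f_*$ on stacks (delicate when $f$ is non-representable, since constructibility of $Rf_*$ of constructible complexes requires care in the stack setting) and a satisfactory comparison between the algebraic $\ell$-adic and topological derived categories for $\bb C$-stacks. Once these are in place, the BBD argument from (\cite{BBD}, 6.2.5) goes through essentially verbatim.
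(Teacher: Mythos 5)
There is no proof of Theorem (\ref{6.2.5}) in this paper to compare against: the remark immediately preceding it states explicitly that the proof is deferred to a future publication, precisely because the two foundational inputs you invoke --- the generic base change theorem for $f_*$ on Artin stacks and the comparison between the adic and topological derived categories of a $\bb C$-stack --- were not yet available. Your sketch does follow the intended route (spread out over a finitely generated $\bb Z$-algebra, specialize to a closed point, apply Theorem (\ref{main-thm}), transfer back), and you are candid that the foundational results are assumed rather than proved. But that candor does not close the gap: as written, your argument is a plan whose load-bearing steps are exactly the statements the paper says are missing, so it cannot be accepted as a proof of (\ref{6.2.5}).

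Beyond the assumed foundations, one step you treat as routine is in fact the delicate heart of the BBD argument. You assert that ``the semi-simplicity of $K$ (propagating through descent) ensures that $K_0$ itself decomposes as a direct sum of shifted $\iota$-pure perverse sheaves.'' Semisimplicity is not preserved under arbitrary specialization; in (\cite{BBD}, 6.2) one must show that each simple perverse constituent of geometric origin, once spread out over $\text{Spec}\,A$, specializes at a suitably chosen (dense set of) closed point(s) to a perverse sheaf that is still simple --- this is where the geometric-origin hypothesis is genuinely used, via the machinery of (\cite{BBD}, 6.1) (constructibility and perverse $t$-exactness of the specialization functors, generic base change, etc.), none of which you verify in the stack setting. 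Only then does (\ref{5.3.4}) give $\iota$-purity of $K_0$ so that (\ref{main-thm}) applies. Likewise, transferring the splitting and semisimplicity of $(f_0)_*K_0$ back to $f^{\text{an}}_*K$ requires the comparison functors to be compatible with the six operations and the perverse $t$-structures on stacks, and requires an argument that semisimplicity over $\bb F$ implies semisimplicity of the corresponding complex over $\bb C$ (again via geometric origin); neither is automatic. So the proposal correctly identifies the strategy the author intends, but it contains genuine gaps at exactly the points the paper flags, plus the unproved specialization-preserves-simplicity step.
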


\end{document}